\documentclass[12pt]{amsart}
\usepackage{fullpage,amssymb}

\setlength\marginparwidth{.9in}
\setlength\marginparsep{.05in}
\usepackage{mathrsfs}

\newtheorem{theorem}{Theorem}[section]
\newtheorem{lemma}[theorem]{Lemma}
\newtheorem{corollary}[theorem]{Corollary}
\newtheorem{proposition}[theorem]{Proposition}
\newtheorem*{notation*}{Notation}
\newtheorem*{p*}{Proposition~\ref{h.s.o.p}}
\theoremstyle{definition}
\newtheorem{definition}[theorem]{Definition}
\newtheorem{example}[theorem]{Example}
\newtheorem{remark}[theorem]{Remark}
\newcommand{\M}{{\operatorname{Mat}}}

\newcommand{\Z}{{\mathbb Z}}

\newcommand{\SL}{{\operatorname{SL}}}

\newcommand{\spa}{\operatorname{span}}

\newcommand{\Q}{{\mathbb Q}}

\newcommand{\rk}{\operatorname{rk}}

\usepackage{multirow}
\usepackage{amsmath}
\usepackage{comment}
\usepackage{tikz}
\usepackage{mathtools}
\usepackage{arydshln}
\usepackage{graphicx}

\title{The regularity lemma is false over small fields}
\author{Harm Derksen}
\address{Department of Mathematics, University of Michigan.}
\email{hderksen@umich.edu}

\author{Visu Makam}
\address{School of Mathematics, Institute for Advanced Study, Princeton.}
\email{visu@ias.edu}
\thanks{This material is based upon work supported by the National Science Foundation under Grant No. DMS-
1601229 and DMS-1638352.}

\keywords{}

\begin{document}
\maketitle

\begin{abstract}
The regularity lemma is a stringent condition of the possible ranks of tensor blow-ups of linear subspaces of matrices. It was proved by Ivanyos, Qiao and Subrahmanyam in \cite{IQS} when the underlying field is sufficiently large. We show that if the field size is too small, the regularity lemma is false.
\end{abstract}

\section{Introduction}
\subsection{Motivation}
A compelling reason to study linear subspaces of matrices is its connection to the problem of Polynomial Identity Testing (PIT), see \cite{Valiant,GGOW}. A polynomial time algorithm for PIT would be a major breakthrough in the field of computational complexity. While an efficient algorithm for PIT remains elusive, recently efficient algorithms for the related problem of Rational Identity Testing (RIT) have been found. An analytic algorithm was given in \cite{GGOW} when the underlying field is $\Q$. For sufficiently large fields, an algebraic algorithm was given in \cite{IQS,IQS2} based on polynomial degree bounds for matrix semi-invariants proved in \cite{DM}. An excellent introduction to these problems, their connection to various subjects, and their significance can be found in \cite{GGOW}.

Crucial to the algebraic algorithm is the regularity lemma proved in \cite{IQS}. Curiously enough, the proof of the regularity lemma requires a sufficiently large field. The authors of \cite{IQS} pose the open problem of whether the regularity lemma holds over small fields as well, see \cite[Remark~5.9]{IQS}. The purpose of this note is to give counterexamples to the regularity lemma when the field is too small. The counterexamples can be generated naturally from understanding the alternate proof of the regularity lemma given in \cite{DM-ncrk}.

\subsection{Definitions and main results}
Let $K$ be a field. Let $\M_{p,q}$ denote the space of $p \times q$ matrices with entries in $K$, and let $\mathcal{X} \subset \M_{p,q}$ be a linear subspace of $p \times q$ matrices.
\begin{definition}
We define the rank of $\mathcal{X}$ to be 
$$
\rk(\mathcal{X}) = \max\{\rk(X)\ |\ X \in \mathcal{X}\}.
$$
\end{definition}  

Given two matrices $A \in \M_{p,q}$ and $B \in \M_{r,s}$, we recall that their Kronecker product is

$$A \otimes B = \begin{bmatrix}
a_{11}B   & a_{12}B     &  \cdots     & a_{1n}B  \\
 a_{21}B  &   \ddots               &                 & \vdots  \\
 \vdots      &       &    \ddots  &\vdots   \\
 a_{m1}B  &       \cdots       & \cdots               & a_{mn}B \\
 \end{bmatrix}\in \M_{pr,qs}.$$

The notion of tensor blow-ups of linear subspaces was introduced by Ivanyos, Qiao and Subrahmanyam in \cite{IQS}.

\begin{definition}
We define the $(r,s)^{th}$ blow-up of the linear subspace of matrices $\mathcal{X}$ to be 
$$
\mathcal{X}^{\{r,s\}} := \mathcal{X} \otimes \M_{r,s} = \{\sum_i X_i \otimes M_i\ |\ X_i \in \mathcal{X}, M_i \in \M_{r,s} \} \subseteq \M_{pr,qs}.
$$
\end{definition}

Tensor blow-ups arise naturally in the study of the left-right action of $\SL_n \times \SL_n$ on $\M_{n,n}^m$. Given a tuple $X = (X_1,\dots,X_m) \in \M_{n,n}^m$, let $\mathcal{X} = \spa (X_1,\dots,X_m) \subseteq \M_{n,n}$. Then it turns out that $X$ is in the null cone for the left-right action if and only if $\mathcal{X}^{\{d,d\}}$ has full rank for some $d$. We refer the reader to \cite{DM,IQS} for more details. This invariant theoretic view point emphasizes the need to understand the properties of ranks of tensor blow-ups.  Two such results are the regularity lemma proved in \cite{IQS}, and the concavity property shown in \cite{DM}. In this paper, we are focused on the regularity lemma.

\begin{theorem} [Regularity lemma, \cite{IQS}] \label{reg.lemma}
Let $\mathcal{X} \subseteq \M_{n,n}$ be a linear subspace of matrices. Let $d \in \Z_{>0}$, and let $|K| \geq (dn)^{\Omega(1)}$. Then $\rk(\mathcal{X}^{\{d,d\}})$ is a multiple of $d$.
\end{theorem}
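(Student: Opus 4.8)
The plan is to study the structure of a maximum-rank element of the blow-up together with the symmetries it inherits, and to extract the divisibility from the fact that over a large field one has enough points available to put everything in general position; this is essentially the argument of \cite{IQS}, repackaged representation-theoretically as in \cite{DM-ncrk}. Set $\mathcal{Y}:=\mathcal{X}^{\{d,d\}}=\mathcal{X}\otimes\M_{d,d}\subseteq\M_{nd,nd}$ and $r:=\rk(\mathcal{Y})$; the goal is $d\mid r$. First I would record the standard compression fact about a maximum-rank element $M\in\mathcal{Y}$: after row and column changes of coordinates one may take $M=\left(\begin{smallmatrix}I_r&0\\0&0\end{smallmatrix}\right)$, and then, since $\rk(M+tN)\le r$ identically in $t$ for every $N\in\mathcal{Y}$, the degree-one coefficient in $t$ of each $(r+1)\times(r+1)$ minor of $M+tN$ must vanish; a short computation forces the lower-right $(nd-r)\times(nd-r)$ block of every $N\in\mathcal{Y}$ to be zero, i.e.\ $\mathcal{Y}(\ker M)\subseteq\im M$, and dually $\mathcal{Y}^{\mathsf T}\big((\im M)^{\perp}\big)\subseteq(\ker M)^{\perp}$.

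Next I would pass to the representation-theoretic description. Writing $\mathcal{X}=\operatorname{span}(X_1,\dots,X_m)$ — the case $m=1$ being immediate, so assume $m\ge2$ — let $L=\sum_i x_iX_i$ be the associated linear matrix over the free algebra $R=K\langle x_1,\dots,x_m\rangle$. A tuple $A=(A_1,\dots,A_m)\in\M_{d,d}^m$ makes $K^d$ into a $d$-dimensional $R$-module $S_A$, the evaluated matrix $\sum_i X_i\otimes A_i$ is obtained by applying $L$ to $S_A$, and $r=\rk(\mathcal{Y})=nd-\min_A\dim_K\operatorname{coker}\!\big(\sum_i X_i\otimes A_i\big)$, the minimum being over all $A$. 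The set of $A$ attaining this minimum is a nonempty Zariski-open subset of $\M_{d,d}^m$, and here the hypothesis $|K|\ge(dn)^{\Omega(1)}$ enters: it produces a $K$-point in general position — in particular one whose $A_i$ generate $\M_{d,d}(K)$, so that $S_A$ is absolutely simple, while also avoiding finitely many auxiliary proper closed conditions of degree polynomial in $n,d$. Finally, $\mathcal{Y}=\mathcal{X}\otimes\M_{d,d}$ is stable under $\GL_d\times\GL_d$ acting through $I_n\otimes g$ on rows and $I_n\otimes h$ on columns; this preserves $r$, carries one general-position witness to another, and twists the modules $S_A$.

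The hard part will be to deduce from this that $d\mid(nd-r)$, i.e.\ that for a generic absolutely simple $d$-dimensional $S_A$ the integer $\dim_K\operatorname{coker}(\sum_i X_i\otimes A_i)$ is a multiple of $d$. The intended mechanism is that the $\GL_d\times\GL_d$-symmetry together with genericity of $S_A$ confines the relevant data — $\ker M$, $\im M$, and the way the cokernel module of $L$ meets $S_A$ — to objects governed by a right $\M_{d,d}(K)$-module, every simple constituent of which has $K$-dimension $d$ (Morita theory), whence divisibility by $d$ is automatic; the compression from the first step is exactly what rigidifies $\ker M$ and $\im M$ enough for this to bite. This is the step that genuinely needs a large field: over a small field one cannot force $S_A$ to be generic, and the cokernel module of $L$ may instead be pinned to a \emph{non}-generic simple module — typically one arising from a division ring over $K$ — with multiplicity not divisible by $d$. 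Understanding precisely how and where the large-field hypothesis is used in this last step is what makes it possible to manufacture the counterexamples once the field is too small.
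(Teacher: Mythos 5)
There is a genuine gap, and it sits exactly at the decisive step. Your setup (the compression $\mathcal{Y}(\ker M)\subseteq\im M$ for a maximum-rank $M$, the reformulation $r=nd-\min_A\dim_K\operatorname{coker}(\sum_i X_i\otimes A_i)$, and the observation that a large field supplies a tuple $A$ with the $A_i$ generating $\M_{d,d}(K)$) is fine, but the paragraph you label ``the hard part'' is not an argument: the claim that the $\GL_d\times\GL_d$-symmetry plus genericity of $S_A$ forces $\ker M$, $\im M$ and $\operatorname{coker}M$ to be governed by a right $\M_{d,d}(K)$-module is asserted, not derived, and as stated it does not follow. The action $I_n\otimes g$, $I_n\otimes h$ preserves the subspace $\mathcal{Y}$ but does \emph{not} fix the particular maximum-rank element $M$; it moves $M$ to another element of $\mathcal{Y}$, so $\ker M$ and $\im M$ acquire no module structure over $\M_{d,d}(K)$ from this symmetry, and Morita theory has nothing to act on. Deducing that the generic cokernel dimension is divisible by $d$ is precisely the content of the theorem, so your sketch is circular at the point where it matters; this is also why your final paragraph, which correctly anticipates where small fields break things, cannot be turned into a proof without a new idea.

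For comparison, the known proofs close this gap in two quite different ways, neither of which is the mechanism you propose. The proof of \cite{IQS} is algorithmic: given an element of $\mathcal{X}^{\{d,d\}}$ of rank $rd+k$ with $0<k<d$, one explicitly constructs another element of rank at least $(r+1)d$; the construction interpolates over polynomially many field elements, which is exactly where the hypothesis $|K|\ge(dn)^{\Omega(1)}$ is consumed. The alternative proof sketched in \cite{DM-ncrk} (and alluded to in Section~3 of this paper) replaces your ``generic $S_A$'' by Amitsur's theorem: over an infinite (or sufficiently large) field the image of the generic matrix ring $ev_d(A_{d,m})$ is a domain whose central quotient is a division algebra, so one can run Gaussian elimination on the linear matrix over this division ring; divisibility of the blow-up rank by $d$ then falls out because every pivot contributes a full $d\times d$ invertible block, the key point being that any $f$ with $ev_d(f)\neq 0$ admits an evaluation $f(A_1,\dots,A_m)$ that is actually invertible. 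That last property is exactly what fails over small fields (e.g.\ $f=T_1^{q^d}-T_1$), which is how the present paper builds its counterexamples. If you want to salvage your outline, the realistic route is to substitute the Amitsur/division-algebra argument for your Morita step rather than to try to extract a module structure from the $\GL_d\times\GL_d$ action.
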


The proof of the regularity lemma in \cite{IQS} is algorithmic in nature. Given a matrix in the $(d,d)^{th}$ blow-up whose rank is $rd + k$ with $k < d$, the proof consists of constructing efficiently a matrix in the $(d,d)^{th}$ blow-up whose rank is at least $(r+1)d$. The procedure requires the lower bound on field size, so it was unclear whether this lower bound on field size is really necessary for the statement to hold. We show that the lower bound on the field size cannot be entirely removed.

\begin{theorem} \label{main}
Let $d,n \in \Z_{\geq 2}$. Suppose $|K| \leq {\rm log}_d (n - 1)$. Consider the $n \times n$ matrix 
$$ 
D(t_0,t_1) = \left[ \begin{array} {c|ccc}
L(t_0,t_1)   &&& \\
 \hline
   &t_0& &  \\
   && \ddots & \\
   &&& t_0 \\
 \end{array} \right], \text{where } L(t_0,t_1) =  \begin{bmatrix} t_1 & t_0 & && & &\\
  & -t_1 & t_0 & & &&\\
  & & - t_1 & t_0 && &\\
  & & & \ddots & \ddots & &\\
   & & & & -t_1 & t_0 \\
   -t_1 & & & & & -t_1 
 \end{bmatrix}
$$
is square of size $|K|^d$. Consider the $2$-dimensional linear subspace $ \mathcal{X} = \{D(x,y)\ |\ x,y \in K \} \subseteq \M_{n,n}$. Then $\rk(\mathcal{X}^{\{d,d\}})$ is not a multiple of $d$.
\end{theorem}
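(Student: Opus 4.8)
The plan is to compute $\rk(\mathcal{X}^{\{d,d\}})$ exactly: I claim it equals $nd-1$, which is not divisible by $d$ since $d\ge 2$. Write $q=|K|$ and $p=\kar K$, let $D_0,D_1$ be the coefficient matrices of $t_0,t_1$ in $D(t_0,t_1)$, and let $L_0,L_1$ be those of $L(t_0,t_1)$; let $\ell$ be the size of $L$ and $k\ge 1$ the size of the scalar block $t_0 I$, so $\ell+k=n$. Since $\mathcal{X}=\spa(D_0,D_1)$, every element of $\mathcal{X}^{\{d,d\}}$ has the form $D_0\otimes M_0+D_1\otimes M_1$ with $M_0,M_1\in\M_{d,d}$, and as $D(t_0,t_1)$ is block diagonal this element is the block diagonal matrix with blocks $L_0\otimes M_0+L_1\otimes M_1$ and $I_k\otimes M_0$. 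Hence
\[
\rk(\mathcal{X}^{\{d,d\}})=\max_{M_0,M_1\in\M_{d,d}}\Bigl(\rk(L_0\otimes M_0+L_1\otimes M_1)+k\cdot\rk(M_0)\Bigr).
\]

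The heart of the argument is a rank formula for the first summand. First observe that $L(t_0,t_1)$ is a cyclic companion pencil: expanding $\det L(t_0,t_1)$ along the first column gives $\det L(t_0,t_1)=\pm\,t_1(t_1^{\ell-1}-t_0^{\ell-1})$. When $M_0$ is invertible, $L_0\otimes M_0+L_1\otimes M_1=(L_0\otimes I_d+L_1\otimes P)(I_\ell\otimes M_0)$ with $P=M_1M_0^{-1}$, so it suffices to understand $L_0\otimes I_d+L_1\otimes P$. For invertible $P$, normalizing the block diagonal (multiply the block rows by $P^{-1},-P^{-1},\dots,-P^{-1}$) and then clearing the lower-left block using the resulting $I_d$'s puts the matrix into block upper triangular form with diagonal blocks $I_d,\dots,I_d,\,I_d-P^{-(\ell-1)}$, so $\rk(L_0\otimes M_0+L_1\otimes M_1)=\ell d-\dim\ker(P^{\ell-1}-I_d)$; for singular $P$ the polynomial identity $\det(L_0\otimes I_d+L_1\otimes P)=\pm\det(P)\det(P^{\ell-1}-I_d)$ (true for invertible $P$, hence for all $P$) shows that $L_0\otimes I_d+L_1\otimes P$ is singular, so its rank is at most $\ell d-1$.

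Now the arithmetic, which is where the smallness of $|K|$ enters: the size $\ell$ of $L$ is chosen so that $\ell-1$ is divisible by $\prod_{e=1}^{d}(q^e-1)$ and coprime to $p$ (the product is automatically coprime to $p$, as each $q^e-1\equiv-1\pmod p$), and the hypothesis $|K|\le\log_d(n-1)$ is precisely what makes room for such an $\ell$ inside an $n\times n$ matrix. For the upper bound: if $P\in\M_{d,d}$ is invertible, each eigenvalue $\lambda$ of $P$ lies in $\mathbb{F}_{q^e}$ for some $e\le d$ (the degree of its minimal polynomial over $K$), so $\lambda^{q^e-1}=1$ and hence $\lambda^{\ell-1}=1$; thus $P^{\ell-1}$ is unipotent and $\dim\ker(P^{\ell-1}-I_d)\ge1$, giving $\rk(L_0\otimes I_d+L_1\otimes P)\le\ell d-1$. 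Plugging into the displayed maximum: when $M_0$ is invertible the bracketed term is $\le(\ell d-1)+kd=nd-1$, and when $\rk(M_0)\le d-1$ it is $\le\ell d+k(d-1)=nd-k\le nd-1$; so $\rk(\mathcal{X}^{\{d,d\}})\le nd-1$. For the lower bound, take $M_0=I_d$ and $P=M_1=I_d+N$ with $N$ a single nilpotent Jordan block: then $P^{\ell-1}-I_d=N\bigl((\ell-1)I_d+\binom{\ell-1}{2}N+\cdots\bigr)$, whose second factor is invertible since $\ell-1\ne0$ in $K$, so $\dim\ker(P^{\ell-1}-I_d)=1$ and $\rk(\mathcal{X}^{\{d,d\}})\ge(\ell d-1)+kd=nd-1$. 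Therefore $\rk(\mathcal{X}^{\{d,d\}})=nd-1$, which is not a multiple of $d$.

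The main obstacle is the rank identity $\rk(L_0\otimes I_d+L_1\otimes P)=\ell d-\dim\ker(P^{\ell-1}-I_d)$: one must recognize $L$ as a cyclic variant of a companion pencil and carry out the block row reduction carefully, minding the order of multiplication by $P$. The second delicate point is the number-theoretic bookkeeping linking the size $\ell$ of $L$ to the field-size bound $|K|\le\log_d(n-1)$, so that simultaneously $\prod_{e=1}^{d}(q^e-1)\mid\ell-1$ (forcing every eigenvalue of every $d\times d$ matrix over $K$ to be an $(\ell-1)$-th root of unity, which yields the upper bound) and $p\nmid\ell-1$ (so that a regular unipotent witnesses corank exactly $1$, which yields the matching lower bound). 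The block-diagonal decomposition, the reduction to $P=M_1M_0^{-1}$, and the final case analysis on $\rk(M_0)$ are routine.
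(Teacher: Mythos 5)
Your overall strategy is essentially a sharpened version of the paper's own argument: your block row reduction of $L_0\otimes I_d+L_1\otimes P$ is the paper's explicit Higman-trick computation for $f=T_1^{s}-T_1$, your two-sided estimate is Proposition~\ref{heavy} in disguise, and you get the exact value $nd-1$ where the paper only traps the rank strictly between consecutive multiples of $d$ (its witness is the nilpotent $E_{1,d}$, yours the regular unipotent, which indeed pins the corank at $1$). The block-diagonal decomposition, the reduction to $P=M_1M_0^{-1}$, the case $\rk(M_0)<d$, and the lower bound (which only uses $p\nmid q^d-1$) are all correct.

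The gap is in the upper bound, and it is exactly at the arithmetic step. The theorem does not let you choose $\ell$: it fixes $\ell=|K|^d=q^d$. Your argument assumes $\prod_{e=1}^{d}(q^e-1)\mid \ell-1$, which fails for $\ell=q^d$ in all but trivial cases (for $q=2$, $d=3$ one has $\ell-1=7$ while the product is $21$; for $q=3$, $d=2$ it is $8$ versus $16$), so as written you have proved a statement about a different, larger matrix (your divisibility condition would correspond to taking $L$ of size $q^{\lcm(1,\dots,d)}$ or so). With the actual $\ell-1=q^d-1$, the implication ``$\lambda^{q^e-1}=1$, hence $\lambda^{\ell-1}=1$'' needs $(q^e-1)\mid(q^d-1)$, i.e.\ $e\mid d$, so your argument only handles eigenvalues whose degree over $\mathbb{F}_q$ divides $d$. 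What the stated matrix requires is that every invertible $P\in\M_{d,d}(\mathbb{F}_q)$ have at least one eigenvalue in $\mathbb{F}_{q^d}$ --- precisely the eigenvalue claim the paper makes when verifying that $f=T_1^{q^d}-T_1$ satisfies the hypothesis of Proposition~\ref{heavy} --- and this is a genuinely delicate point, not a rephrasing of your condition: it holds for $d\le 4$ (every factorization pattern of a degree-$d$ characteristic polynomial contains a degree dividing $d$), but for $d=5$ a block-diagonal $P$ built from companion matrices of an irreducible quadratic and an irreducible cubic has no eigenvalue in $\mathbb{F}_{q^5}$, and for that $P$ your first block becomes full rank, so no bound of the form $\rk(\mathcal{X}^{\{d,d\}})\le nd-1$ can be extracted this way for the stated $D(t_0,t_1)$. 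To prove the theorem as stated you must either justify the eigenvalue-in-$\mathbb{F}_{q^d}$ claim for the relevant $d$ or modify the size of $L$; your current write-up silently substitutes the latter.
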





\section{Linear matrices}
Linear subspaces of matrices can be described using the language of linear matrices.

\begin{definition}
A $p \times q$ linear matrix $L(t_1,t_2,\dots,t_m)$ is a $p \times q$ matrix whose entries are linear expressions in a collection of indeterminates $t_1,t_2,\dots t_m$ with coefficients from $K$. Equivalently, we can write $L(t_1,\dots,t_m) = t_1X_1 + t_2X_2 + \dots + t_mX_m$ for some $X_i \in \M_{p,q}(K)$.
\end{definition}

\begin{example}
An example of a $3 \times 3$ linear matrix is given by  $$
L(t_1,t_2,t_3) := \begin{bmatrix} 0 & t_1 & t_2 \\ -t_1 & 0 & t_3 \\ -t_2 & - t_3 & 0 \end{bmatrix}.
$$
\end{example}
A linear matrix can be thought of as a parametrization of a linear subspace of matrices. Indeed, by specializing the indeterminates in a linear matrix to all possible values of the field $K$, we get a linear subspace of matrices. 
In the above example, the linear matrix $L(t_1,t_2,t_3)$ parametrizes the subspace of $3 \times 3$ matrices consisting of all skew-symmetric matrices.

We can even substitute matrices for the indeterminates. More precisely, if $L(t_1,\dots,t_m)$ is a $p \times q$ linear matrix, and $A_1,\dots,A_m$ are $r \times s$ matrices, then $L(A_1,\dots,A_m)$ denotes the $pr \times qs$ matrix obtained by replacting each entry in $L(t_1,\dots,t_m)$ with the corresponding linear combination of $A_i$. 
For instance, in the above example,
$$
L(A_1,A_2,A_3) = \begin{bmatrix} 0 & A_1 & A_2 \\ -A_1 & 0 & A_3 \\ -A_3 & -A_2 & 0 \end{bmatrix} \in \M_{3r,3s}.
$$

Observe that if $L(t_1,\dots,t_m) = t_1X_1 + \dots + t_mX_m$, then $L(A_1,\dots,A_m) = X_1 \otimes A_1 + \dots + X_m \otimes A_m$.


\begin{definition}
The rank of a linear matrix $L(t_1,\dots,t_m)$ is defined as 
$$
\rk(L(t_1,\dots,t_m)) = \max\{ \rk(L(a_1,\dots,a_m))\ |\ a_i \in K\}.
$$
\end{definition}

\begin{definition}
We define the $(r,s)^{th}$ blow-up rank of a linear matrix $L(t_1,\dots,t_m)$ as 
$$
\rk(L^{\{r,s\}}) = \max\{ \rk(L(A_1,\dots,A_m)\ |\ A_i \in \M_{r,s}(K) \}.
$$
\end{definition}

\begin{remark}
For a linear matrix $L(t_1,\dots,t_m)$, let $\mathcal{X}_L$ denote the associated linear subspace of matrices. Then it is easy to see that $\rk(L) = \rk(\mathcal{X}_L)$ and $\rk(L^{\{r,s\}}) = \rk(\mathcal{X}_L^{\{r,s\}})$.
\end{remark}

\section{Ring of matrix functions} \label{gen.mat}
For this section, let us fix $d \in \Z_{\geq 2}$. Consider the skew polynomial ring $A_{d,m} = K \left< T_1,\dots,T_m \right>$ in $n$ indeterminates. Any $f(T_1,\dots,T_m) \in A_{d,m}$ can be interpreted as a function $ev_d (f) : \M_{d,d}^m$ to $\M_{d,d}$ given by $ev_d(f) (A_1,\dots,A_m) = f(A_1,\dots,A_m)$. This gives a homomorphism
$$
ev_d: A_{d,m} \rightarrow {\rm Hom}_{\rm Sets} (\M_{d,d}^m, \M_{d,d}).
$$

Suppose $K$ is an infinite field. Then it is a result of Amitsur that the image $ev_d(A_{d,m})$ is a domain. Further its central quotient is a division algebra and is called a universal division algebra. This fact, allows for a proof of the regularity lemma using Gaussian elimination. We refer to \cite{DM-ncrk} for details. 

In particular, Amitsur's results imply this astonishing fact that for every $f$ such that $ev_d(f) \neq 0$, we will be able to find $A_1,\dots,A_m \in \M_{d,d}$ such that $f(A_1,\dots,A_m)$ is invertible. This is no longer true if $K$ is a finite field, and we will exploit this to get counterexamples to the regularity lemma.



\subsection{Higman's trick}
Higman's trick is a technique that converts non-commutative polynomials to linear matrices.

\begin{lemma} [Higman's trick] \label{Higman}
Let $f(t_1,\dots,t_m)$ be a non-commutative polynomial. There is a (square) linear matrix $L_f(t_0,\dots,t_m)$ with the property that using elementary (block) row and (block) column operations, for any $(A_1,\dots,A_m) \in \M_{d,d}^m$ for any $d$,  one can transform
$$
L_f(I,A_1,\dots,A_m) \longrightarrow \left[ \begin{array} {cccc|c}
 I & 0 & \dots & 0 & 0 \\
 0 & I &  & \vdots & \vdots \\
  \vdots &  & \ddots & 0 & \vdots \\  
 0 &\dots & 0 & I & 0 \\
 \hline
 0 & \dots &  & 0 & f(A_1,\dots,A_m)\\       
 \end{array} \right].
$$  
\end{lemma}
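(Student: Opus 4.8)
\textbf{Proof plan for Lemma~\ref{Higman} (Higman's trick).}

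The plan is to construct $L_f$ explicitly by induction on the structure of $f$, reducing the degree of the polynomial at the cost of introducing new rows and columns carrying identity blocks. Write $f(t_1,\dots,t_m)=\sum_{j} c_j\, t_{i_{j,1}} t_{i_{j,2}} \cdots t_{i_{j,\ell_j}}$ as a sum of monomials with coefficients $c_j \in K$. The key observation is the following ``pivot trick'': if $M$ is any block matrix containing a block entry of the form $gh$ (a product of two earlier-constructed expressions), then in the $2\times 2$ block layout
$$
\begin{bmatrix} gh & * \\ * & * \end{bmatrix}
$$
we can append one new row and one new column and an identity block so that
$$
\begin{bmatrix} 0 & g \\ -h & X \end{bmatrix}
$$
(with $X$ chosen appropriately) transforms, via block row/column operations using the $-h$ (or an $I$) as pivot, into a matrix where the $gh$ block is replaced by $0$ and an extra diagonal $I$ appears. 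Iterating this replaces each length-$\ell$ monomial by a linear matrix (linear in $t_0,\dots,t_m$, where $t_0$ is the slot into which we will substitute $I$) whose only ``interesting'' block — after clearing all the identity pivots — is the monomial value, with all other diagonal blocks equal to $I$. Summing the monomials is handled by placing these gadgets so that their ``output'' blocks all land in a single common entry, which then accumulates $\sum_j c_j(\text{monomial}_j) = f$.

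Concretely, I would proceed as follows. \textbf{Step 1:} Handle a single monomial $t_{i_1}\cdots t_{i_\ell}$ of degree $\ell \ge 2$. Introduce $\ell - 1$ auxiliary indeterminate-slots. Using the indeterminates $t_{i_1},\dots,t_{i_\ell}$ together with $t_0$ (the identity slot), write down the standard bidiagonal linearization, e.g.
$$
\begin{bmatrix}
t_{i_\ell} & t_0 & & \\
 & t_{i_{\ell-1}} & t_0 & \\
 & & \ddots & \ddots \\
 & & & t_{i_1}
\end{bmatrix}
$$
of size $(\ell)\times(\ell)$, and verify by explicit block row/column operations — clearing the superdiagonal $t_0 \mapsto I$ blocks from the bottom up — that $L(I, A_1,\dots,A_m)$ reduces to $\mathrm{diag}(I,\dots,I, A_{i_1}A_{i_2}\cdots A_{i_\ell})$ (up to sign, which is absorbed into $c_j$). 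This is the one genuinely computational check, but it is a short induction on $\ell$. \textbf{Step 2:} Take the direct sum of the gadgets for all the monomials $j = 1,\dots,N$, so that $L_f(I,A_1,\dots,A_m)$ reduces to a diagonal block matrix with a bunch of $I$'s and the blocks $c_j A_{i_{j,1}}\cdots A_{i_{j,\ell_j}}$ on the diagonal. \textbf{Step 3:} Use further block column operations to add all the monomial-output blocks into one designated entry: since each such block sits in its own row and column with an adjacent $I$ pivot available, a sequence of block-row and block-column additions collects $\sum_j c_j A_{i_{j,1}}\cdots A_{i_{j,\ell_j}} = f(A_1,\dots,A_m)$ into the bottom-right block while leaving identities elsewhere, which is exactly the claimed normal form. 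Finally, handle the trivial cases: degree-$1$ or degree-$0$ terms of $f$ (constants and linear parts) need no linearization and are simply added directly into the output block; if $f$ itself is already linear, $L_f = f$ (bordered by $I$'s) works.

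\textbf{Main obstacle.} The conceptual content is easy; the real work is bookkeeping. The delicate point is \emph{Step 3}: one must be careful that the block operations used to merge the various monomial outputs into a single entry do not disturb the identity blocks already produced, and that operations performed for one monomial's gadget do not interfere with another's — this is why keeping the gadgets in disjoint sets of rows/columns (direct sum) before merging is essential, and why each output block must retain an adjacent identity pivot to route the additions through. A secondary nuisance is that the reduction in Step 1 naturally produces the monomial value possibly with a sign or in reversed order $A_{i_\ell}\cdots A_{i_1}$ depending on how one orients the bidiagonal; I would fix conventions at the outset (e.g. read monomials right-to-left, as the displayed $L(t_0,t_1)$ in Theorem~\ref{main} suggests) so that the exponents line up. None of this requires a large field or any hypothesis on $K$ — the construction is purely formal — which is precisely what makes Higman's trick available in the finite-field setting where the regularity lemma fails.
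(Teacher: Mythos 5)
You are attempting more than the paper does: the paper explicitly declines to prove Lemma~\ref{Higman} in general (it cites Higman and \cite{GGOW}) and instead verifies, by hand, the one explicit $L_f$ it needs for $f=T_1^s-T_1$ (the bidiagonal chain with the wrap-around $-t_1$ entry). Your Step~1 is fine and is essentially that same chain computation. The genuine gap is in Steps~2--3: you cannot take the \emph{direct sum} of the per-monomial gadgets and then merge their output blocks into a single entry by further block row/column operations. Elementary block row and column operations are invertible and therefore preserve rank. If $f=\sum_j c_j m_j$ with monomials $m_j$ of degree $\ell_j\geq 2$, your direct-sum matrix $L_f(I,A_1,\dots,A_m)$ has rank $\sum_j\bigl(d(\ell_j-1)+\rk(m_j(A_1,\dots,A_m))\bigr)$, while the target normal form of the same size has rank $d\bigl(\sum_j\ell_j-1\bigr)+\rk\bigl(f(A_1,\dots,A_m)\bigr)$; these disagree in general, so no sequence of operations can connect them. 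Concretely, take $f=t_1t_2-t_2t_1$, $d=1$, $A_1=A_2=1$: the direct sum of the two $2\times 2$ gadgets has rank $4$, but the claimed form has rank $3$. Relatedly, the assertion that ``each output block retains an adjacent identity pivot'' is false once the gadget is reduced: the monomial value sits alone in its row and column, so there is no pivot through which to route the addition.

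The standard (and correct) construction merges the monomials \emph{inside} $L_f$ rather than afterwards: apply the bordering step that replaces an entry $g h + (\text{rest})$ by the block $\left[\begin{smallmatrix} \text{rest} & g \\ -h & t_0 \end{smallmatrix}\right]$ repeatedly to the whole polynomial, so that every monomial chain is attached to one common corner entry, and all terms of degree $\leq 1$ are written directly into that corner. The result is a ``star'' of chains sharing a single output entry, and your Step~1 elimination then collects $f(A_1,\dots,A_m)$ there without any rank obstruction. This is exactly the shape of the paper's own $L_f$: one chain linearizing $T_1^s$, with the extra $-t_1$ in the bottom-left corner accumulating the $-T_1$ term into the same output entry. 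Your closing observation that the construction is purely formal and needs no hypothesis on $K$ is correct and is indeed the point of using Higman's trick here.
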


Higman's trick first appeared in \cite{Higman}. We also refer to \cite{GGOW} for details. We point out that in the above lemma, while $f$ is a polynomial in $m$ indeterminates, $L_f$ is a linear matrix in $m+1$ indeterminates. This extra indeterminate (which is $t_0$) is important. We will not delve into the proof of Higman's trick. We will be content to mention that Higman's trick is constructive. In the cases that we are interested, we will simply exhibit the required $L_f$, and explicitly check that it satisfies the conclusion of Higman's trick.

We will need a modification of the linear matrix obtained from Higman's trick. For $r \in \Z_{\geq 1}$, we consider the linear matrix 
$$
D_{f,r} = \left[\begin{array} {c|ccc} L_f &  & &  \\ \hline & t_0 && \\  &  & \ddots &  \\  &  &  &  t_0 \end{array} \right],
$$
where the lower right block is of size $r \times r$.

\begin{proposition} \label{heavy}
Suppose $f \in A_{d,m}$ such that $f(A_1,\dots,A_m)$ is singular for all $(A_1,\dots,A_m) \in \M_{d,d}^m(K)$, and $ev_d(f) \neq 0$. Then $\rk(D_{f,r}^{\{d,d\}})$ is not a multiple of $d$.
\end{proposition}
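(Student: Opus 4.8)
The plan is to compute $\rk(D_{f,r}^{\{d,d\}})$ tightly enough to trap it strictly between two consecutive multiples of $d$. Write $N$ for the size of the square linear matrix $L_f$, so that $D_{f,r}$ is square of size $N+r$ and its $(d,d)$-blow-up is $(N+r)d\times(N+r)d$. Writing $L_f=\sum_{i=0}^m t_iY_i$ with $Y_i\in\M_{N,N}(K)$, the structural fact I would record first is that for any $A_0,A_1,\dots,A_m\in\M_{d,d}(K)$ the matrix $D_{f,r}(A_0,A_1,\dots,A_m)$ is block diagonal with diagonal blocks $L_f(A_0,A_1,\dots,A_m)$ and $I_r\otimes A_0$, hence
$$
\rk\bigl(D_{f,r}(A_0,A_1,\dots,A_m)\bigr)=\rk\bigl(L_f(A_0,A_1,\dots,A_m)\bigr)+r\cdot\rk(A_0).
$$
So everything reduces to controlling $\rk(L_f(A_0,\dots,A_m))$, for which the two tools are Higman's trick (Lemma~\ref{Higman}) and, when $A_0$ is invertible, the linearity identity $L_f(A_0,A_1,\dots,A_m)=(I_N\otimes A_0)\cdot L_f(I_d,A_0^{-1}A_1,\dots,A_0^{-1}A_m)$, which is immediate from $L_f(A_0,\dots,A_m)=\sum_{i=0}^m Y_i\otimes A_i$.

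For the lower bound I would use the hypothesis $ev_d(f)\neq 0$ to choose $B_1,\dots,B_m\in\M_{d,d}(K)$ with $f(B_1,\dots,B_m)\neq 0$. Higman's trick transforms $L_f(I_d,B_1,\dots,B_m)$ by rank-preserving elementary block operations into the block-diagonal matrix with $N-1$ copies of $I_d$ and one copy of $f(B_1,\dots,B_m)$, so $\rk(L_f(I_d,B_1,\dots,B_m))=(N-1)d+\rk(f(B_1,\dots,B_m))\geq(N-1)d+1$. Substituting $A_0=I_d$, $A_i=B_i$ into the displayed formula then yields $\rk(D_{f,r}^{\{d,d\}})\geq(N-1)d+1+rd=(N+r-1)d+1$.

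For the upper bound I would estimate $\rk(L_f(A_0,\dots,A_m))+r\,\rk(A_0)$ for an arbitrary substitution, splitting into two cases. If $A_0$ is invertible, then by the linearity identity (conjugating away the invertible factor $I_N\otimes A_0$) together with Higman's trick, $\rk(L_f(A_0,\dots,A_m))=(N-1)d+\rk\bigl(f(A_0^{-1}A_1,\dots,A_0^{-1}A_m)\bigr)\leq(N-1)d+(d-1)$, where the last inequality uses that $f$ is everywhere singular on $\M_{d,d}^m(K)$; adding $r\,\rk(A_0)=rd$ gives total $\leq(N+r)d-1$. If instead $A_0$ is singular, then $\rk(A_0)\leq d-1$, so the second block contributes at most $r(d-1)$, and combined with the trivial bound $\rk(L_f(A_0,\dots,A_m))\leq Nd$ the total is at most $Nd+r(d-1)=(N+r)d-r\leq(N+r)d-1$ since $r\geq1$. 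In both cases $\rk(D_{f,r}^{\{d,d\}})\leq(N+r)d-1=(N+r-1)d+(d-1)$.

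Combining the two bounds, $(N+r-1)d+1\leq\rk(D_{f,r}^{\{d,d\}})\leq(N+r-1)d+(d-1)$, and since $d\geq2$ this range contains no multiple of $d$, which is the assertion. The step I expect to require the right observation is the singular case of the upper bound: the temptation is to look for a sharp bound on $\rk(L_f(A_0,\dots,A_m))$ when $A_0$ drops rank, but none is needed, because the rank forced out of the auxiliary block $I_r\otimes A_0$ (namely at least $r$, one for each copy of $t_0$) already absorbs the gap up to $(N+r)d$. This is precisely why the proposition is phrased for $D_{f,r}$ with its extra $I_r\otimes t_0$ block rather than for $L_f$ alone; everything else — the block splitting, the linearity identity, and the appeal to Higman's trick — is routine.
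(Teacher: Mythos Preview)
Your proof is correct and follows essentially the same approach as the paper's: both argue by trapping $\rk(D_{f,r}^{\{d,d\}})$ strictly between $(N+r-1)d$ and $(N+r)d$ via the block-diagonal splitting, the Higman-trick rank computation when $A_0$ is invertible (after factoring out $I_N\otimes A_0$), and the observation that a singular $A_0$ already kills full rank through the $I_r\otimes A_0$ block. Your write-up is in fact slightly more explicit than the paper's --- you spell out the linearity identity and make the singular-case bound quantitative, whereas the paper simply invokes ``a simple change of basis argument'' and ``cannot have full rank'' --- but the argument is the same.
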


\begin{proof}
Suppose the size of the linear matrix $L_f$ is $m \times m$, then $D_{f,r}$ is a linear matrix of size $(m+r) \times (m+r)$. The first step is to prove that $\rk(D_f(A_0,\dots,A_m)) < d(m+r)$ for all $A_0,\dots,A_m \in \M_{d,d}(K)$. If $A_0$ is not invertible, then since $D_{f,r}(A_0,\dots,A_m)$ has diagonal blocks of the form $A_0$, it cannot have full rank. On  the other hand, if $A_0$ is invertible, then 
\begin{align*}
\rk(D_{f,r}(A_0,A_1,\dots,A_m)) &= \rk(L_f(A_0,A_1,\dots,A_m)) + dr \\
&= \rk(L_f(I,A_0^{-1}A_1,\dots,A_0^{-1}A_m)) + dr \\
& = (d(m-1) + \rk(f(A_0^{-1}A_1,\dots,A_0^{-1}A_m)) + dr
\end{align*}

The first equality follows from the structure of $D_{f,r}$, and the second follows from a simple change of basis argument. The last equality follows from Lemma~\ref{Higman}. But now, we know that $\rk(f(A_1,\dots,A_m)) < d$ by hypothesis, so $\rk(D_{f,r}(A_1,\dots,A_m)) < d(m+r)$.

The second step is to show that $rk(D_{f,r}^{\{d,d\}}) > d(m+r -1)$. Observe that since $ev_d(f) \neq 0$, there exist $(B_1,\dots,B_m) \in \M_{d,d}^m(K)$ such that $f(B_1,\dots,B_m) \neq 0$. Thus, by the above computation, we have 
$$
\rk(D_{f,r}(I,B_1,\dots,B_m)) = d(m-1) + \rk(f(B_1,\dots,B_m)) + dr > d(m+ r -1).
$$

Thus we have 
$$
d(m+r-1) < \rk(D_{f,r}^{\{d,d\}}) < d(m+r).
$$

Thus $\rk(D_{f,r}^{\{d,d\}})$ cannot be a multiple of $d$.
\end{proof}

\begin{corollary} \label{light}
Suppose $f$ and $D_{f,r}$ are as in the above proposition. Suppose $\mathcal{X}_{f,r}$ denotes the linear subspace parametrized by the linear matrix $D_{f,r}$. Then $\rk(\mathcal{X}_{f,r}^{\{d,d\}}) = \rk(D_{f,r}^{\{d,d\}})$ is not a multiple of $d$.
\end{corollary}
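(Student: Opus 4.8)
The plan is to deduce the corollary directly from Proposition~\ref{heavy} together with the dictionary between linear matrices and linear subspaces of matrices recorded in the Remark at the end of Section~2. The hypotheses imposed on $f$ and $D_{f,r}$ in the statement of the corollary are exactly those of Proposition~\ref{heavy}, so the only thing to verify is the identity $\rk(\mathcal{X}_{f,r}^{\{d,d\}}) = \rk(D_{f,r}^{\{d,d\}})$.

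For that identity, write $D_{f,r}(t_0,t_1,\dots,t_m) = \sum_{j=0}^m t_j X_j$ for suitable square matrices $X_0,\dots,X_m$ over $K$, so that $\mathcal{X}_{f,r} = \spa(X_0,\dots,X_m)$. A general element of $\mathcal{X}_{f,r}^{\{d,d\}} = \mathcal{X}_{f,r} \otimes \M_{d,d}$ is a sum $\sum_i Y_i \otimes M_i$ with $Y_i \in \mathcal{X}_{f,r}$ and $M_i \in \M_{d,d}(K)$; expanding each $Y_i$ in the spanning set $X_0,\dots,X_m$ and using bilinearity of $\otimes$ shows that every such element equals $\sum_{j=0}^m X_j \otimes A_j$ for suitable $A_j \in \M_{d,d}(K)$, while conversely every sum of that form lies in $\mathcal{X}_{f,r}^{\{d,d\}}$. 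Since $\sum_{j=0}^m X_j \otimes A_j = D_{f,r}(A_0,A_1,\dots,A_m)$ by the observation at the end of Section~2, we obtain
$$
\mathcal{X}_{f,r}^{\{d,d\}} = \{ D_{f,r}(A_0,A_1,\dots,A_m)\ |\ A_j \in \M_{d,d}(K)\},
$$
and taking the maximum of the ranks over both descriptions gives $\rk(\mathcal{X}_{f,r}^{\{d,d\}}) = \rk(D_{f,r}^{\{d,d\}})$; this is precisely the Remark of Section~2 applied to the linear matrix $L = D_{f,r}$.

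It then remains to invoke Proposition~\ref{heavy} for $f$ and $D_{f,r}$, which shows that $\rk(D_{f,r}^{\{d,d\}})$ lies strictly between two consecutive multiples of $d$, hence is not a multiple of $d$. Combining this with the identity above, $\rk(\mathcal{X}_{f,r}^{\{d,d\}})$ is not a multiple of $d$ either, which is the assertion of the corollary.

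There is no genuine obstacle in this argument: the corollary is essentially a reformulation of Proposition~\ref{heavy} in the language of subspaces rather than linear matrices. The only step meriting the short justification above is the bilinearity computation identifying the blow-up of the subspace $\mathcal{X}_{f,r}$ with the family of matrices obtained by substituting arbitrary $d \times d$ matrices into $D_{f,r}$; everything else is immediate.
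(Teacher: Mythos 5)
Your proof is correct and matches the paper's own (implicit) argument: the paper offers no separate proof of the corollary precisely because it follows at once from the Remark of Section~2 identifying $\rk(\mathcal{X}_L^{\{r,s\}})$ with $\rk(L^{\{r,s\}})$, combined with Proposition~\ref{heavy}. Your bilinearity computation is just a careful write-out of that Remark for $L = D_{f,r}$, so nothing further is needed.
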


\section{Proof of main result}
From the previous section, we know that one can generate counterexamples to the regularity lemma by finding non-commutative polynomials $f \in A_{d,m}$ that satisfy the hypothesis of Proposition~\ref{heavy}. So, it remains to find such non-commutative polynomials.

\begin{lemma}
Let $K = F_q$ be the field with $q$ elements, and let $f = T_1^s - T_1 \in A_{d,1}$ where $s = q^d = |K|^d$. Then $f$ satisfies the hypothesis of Proposition~\ref{heavy}.
\end{lemma}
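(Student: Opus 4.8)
The plan is to verify the two hypotheses of Proposition~\ref{heavy} for $f = T_1^s - T_1$ with $s = |K|^d = q^d$: that $ev_d(f) \neq 0$, and that $f(A) = A^s - A$ is singular for every $A \in \M_{d,d}(K)$.

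The first hypothesis is immediate from a single example. Since $d \geq 2$, the matrix unit $E_{12} \in \M_{d,d}(K)$ (with a single $1$ in position $(1,2)$) is nonzero and satisfies $E_{12}^2 = 0$; as $s = q^d > d \geq 2$ we get $E_{12}^s = 0$, so $f(E_{12}) = E_{12}^s - E_{12} = -E_{12} \neq 0$, and hence $ev_d(f) \neq 0$. Any nonzero nilpotent matrix works equally well, since $s > d$ forces its $s$-th power to vanish.

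The real content is the second hypothesis. Fix $A \in \M_{d,d}(K)$ and pass to $\overline{\mathbb{F}_q}$, over which $A$ is conjugate to an upper triangular matrix whose diagonal entries are the eigenvalues $\lambda_1, \dots, \lambda_d$ of $A$ (with multiplicity); then $A^s - A$ is conjugate to an upper triangular matrix with diagonal $\lambda_1^s - \lambda_1, \dots, \lambda_d^s - \lambda_d$, so that
$$
\det(A^s - A) \;=\; \prod_{i=1}^d \bigl(\lambda_i^s - \lambda_i\bigr).
$$
It therefore suffices to find one index $i$ with $\lambda_i^s = \lambda_i$, equivalently with $\lambda_i \in \mathbb{F}_{q^d}$ (the fixed set of $x \mapsto x^{q^d}$); the cleanest outcome would be that \emph{all} the $\lambda_i$ lie in $\mathbb{F}_{q^d}$, making $A^s - A$ nilpotent, hence singular. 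Each $\lambda_i$ is a root of the characteristic polynomial of $A$, a monic degree-$d$ polynomial over $\mathbb{F}_q$, so $\lambda_i$ has $\mathbb{F}_q$-degree at most $d$; the task is to upgrade this degree bound to the containment $\lambda_i \in \mathbb{F}_{q^d}$. I would reduce this to the field-theoretic assertion that the characteristic polynomial of $A$ is not coprime in $\mathbb{F}_q[x]$ to $x^{q^d} - x$ — the product of all monic irreducibles over $\mathbb{F}_q$ of degree dividing $d$ — i.e. that it has an irreducible factor whose degree divides $d$. Establishing this statement about factorization degrees is the step I expect to be the main obstacle, and it is where the exact value of $s$ (and any hypothesis relating $d$ and $|K|$) has to enter; the safe sufficient condition one would aim for is that the exponent of $q$ in $s$ be a multiple of $\lcm(1,\dots,d)$, which forces $x^s - x$ to be divisible by \emph{every} irreducible of degree $\leq d$ and hence makes $A^s-A$ nilpotent for all $A$. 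Granting this, $\det(A^s - A) = 0$ for every $A \in \M_{d,d}(K)$, which together with the first part verifies both hypotheses of Proposition~\ref{heavy}.
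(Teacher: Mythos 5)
Your verification of the first hypothesis is fine, and your reduction of the second one is exactly right: triangularizing over $\overline{\mathbb{F}_q}$ gives $\det(A^s-A)=\prod_i(\lambda_i^s-\lambda_i)$, and since the roots of $x^{q^d}-x$ are precisely the elements of $\mathbb{F}_{q^d}$, singularity of $f(A)$ for $s=q^d$ is equivalent to $A$ having an eigenvalue in $\mathbb{F}_{q^d}$, i.e.\ to the characteristic polynomial of $A$ having an irreducible factor of degree \emph{dividing} $d$. But you then stop (``granting this''), so as a proof your text has a genuine gap: that one step is the entire content of the lemma, and the sufficient condition you retreat to (the exponent of $q$ in $s$ being a multiple of $\lcm(1,\dots,d)$) is not the hypothesis of the statement, which fixes $s=q^d$.

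Your hesitation is, however, well founded, and this is where the comparison with the paper is instructive. The paper's proof argues that every eigenvalue of $A$ lies in an extension of $\mathbb{F}_q$ of degree at most $d$ and concludes that it is a root of $x^{q^d}-x$; this silently uses $\mathbb{F}_{q^e}\subseteq\mathbb{F}_{q^d}$ for all $e\le d$, which holds only when $e\mid d$ --- precisely the distinction you flagged. For $d=2,3,4$ every partition of $d$ contains a part dividing $d$, so the characteristic polynomial always has an irreducible factor of degree dividing $d$ and the conclusion does hold; but already for $d=5$ it fails: take $A\in\M_{5,5}(\mathbb{F}_q)$ block-diagonal with companion matrices of an irreducible quadratic and an irreducible cubic. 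Every eigenvalue then lies in $\mathbb{F}_{q^2}\setminus\mathbb{F}_q$ or $\mathbb{F}_{q^3}\setminus\mathbb{F}_q$, and since $\mathbb{F}_{q^2}\cap\mathbb{F}_{q^5}=\mathbb{F}_{q^3}\cap\mathbb{F}_{q^5}=\mathbb{F}_q$, no eigenvalue lies in $\mathbb{F}_{q^5}$, whence $A^{q^5}-A$ is invertible. So the step you left open cannot be filled for general $d$ with $s=q^d$; the correct repair is the one you name, taking $s=q^{\lcm(1,\dots,d)}$ (or $q^{d!}$), which makes $A^s-A$ nilpotent for every $A\in\M_{d,d}(\mathbb{F}_q)$, at the cost of enlarging the block $L_f$ and adjusting the relation between $n$, $d$ and $|K|$ in Theorem~\ref{main} accordingly. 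In short: your write-up is incomplete as a proof of the lemma as stated, but the obstacle you identified is real rather than a failure of technique, and your proposed modification is the natural way to obtain a correct counterexample for every $d$.
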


\begin{proof} For any $d \times d$ matrix with entries in $K$, its characteristic polynomial is a polynomial of degree $d$. In particular it's roots lie in some extension of $F_q$ of degree at most $d$. Let $s = q^d$, and let $f = T_1^s - T_1$. Then observe that $ev_d(f) \neq 0$, since $f(E_{1,d}) \neq 0$, where $E_{1,d}$ is the matrix with a $1$ in $(1,d)^{th}$ spot, and $0$ everywhere else. On the other hand, any $A_1 \in \M_{d,d}$ must have an eigenvalue that is a root of $f$. So $f(A_1)$ cannot be invertible for any $A_1 \in \M_{d,d}$.
\end{proof}

\begin{lemma}
Let $f = T_1^s - T_1$. Then the $s \times s$ matrix 
$$
\begin{bmatrix} t_1 & t_0 & && & &\\
  & -t_1 & t_0 & & &&\\
  & & - t_1 & t_0 && &\\
  & & & \ddots & \ddots & &\\
   & & & & -t_1 & t_0 \\
   -t_1 & & & & & -t_1 
 \end{bmatrix}
$$
can be used as the linear matrix $L_f$ in the statement for Higman's trick.
\end{lemma}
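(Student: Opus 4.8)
The plan is to verify directly that the given $s \times s$ linear matrix $L_f$, with $f = T_1^s - T_1$, satisfies the conclusion of Higman's trick (Lemma~\ref{Higman}): namely that $L_f(I, A_1)$ can be transformed by elementary block row and column operations into the block matrix $\operatorname{diag}(I, I, \dots, I, f(A_1))$ where there are $s-1$ identity blocks and $f(A_1) = A_1^s - A_1$ in the bottom-right corner. Here $L_f(I,A_1)$ is the $sd \times sd$ matrix obtained by replacing $t_0$ with $I$ (the $d \times d$ identity) and $t_1$ with $A_1$, so it has $-A_1$ or $A_1$ on the block diagonal, $I$ on the block superdiagonal, and an extra $-A_1$ in the bottom-left corner.

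The natural way to do this is column reduction working left to right, clearing out the entries above each superdiagonal identity. Concretely I would write $L_f(I,A_1)$ with block columns $C_1, \dots, C_s$ and block rows $R_1, \dots, R_s$. Column $C_1$ has $A_1$ in row $R_1$ and $-A_1$ in row $R_s$; column $C_j$ for $2 \le j \le s-1$ has $I$ in row $R_{j-1}$ and $-A_1$ in row $R_j$; column $C_s$ has $I$ in row $R_{s-1}$ and $-A_1$ in row $R_s$. Using the $I$ block in position $(R_{j-1}, C_j)$ as a pivot for $j = 2, \dots, s-1$ (in increasing order of $j$), I can perform block column operations to eliminate the $-A_1$ entries sitting in rows $R_1, \dots, R_{s-2}$ — but the key point is to instead use these pivots to push the ``interesting'' matrix down toward the bottom-right. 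The cleanest bookkeeping is: add $A_1 \cdot C_2$ to $C_1$ to kill the $A_1$ in position $(R_1, C_1)$, which introduces $A_1^2$ in position $(R_2, C_1)$ (since $C_2$ has $-A_1$ in row $R_2$); then add $A_1^2 \cdot C_3$ to $C_1$ to kill that, introducing $A_1^3$ in row $R_3$; and so on. Iterating down to $C_{s-1}$ and $C_s$, the accumulated entry in the bottom-left block $(R_s, C_1)$ becomes $-A_1 + (-A_1)(A_1^{s-1}) \cdot(\text{sign})$; tracking the signs carefully through the alternating $\pm t_1$ pattern, this should collapse to exactly $\pm(A_1^s - A_1) = \pm f(A_1)$, while $C_1$ now has a single nonzero block in row $R_s$. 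After that, the block columns $C_2, \dots, C_s$ together with rows $R_1, \dots, R_{s-1}$ form an upper-triangular system with $I$ blocks on the superdiagonal and $\pm A_1$ on the diagonal, which by further column operations (clearing the $\pm A_1$ diagonal entries against the $I$'s) and row/column permutations reduces to $s-1$ identity blocks in the first $s-1$ block rows and $f(A_1)$ isolated in the last block row and column.

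The main obstacle I anticipate is purely one of sign-bookkeeping: the matrix $L$ has $t_1$ in its top-left entry but $-t_1$ in all other diagonal entries and $-t_1$ in the bottom-left corner, and this asymmetric sign pattern is precisely engineered so that the telescoping product of signs produces $A_1^s - A_1$ rather than $A_1^s + A_1$ or some other combination. So the real content of the proof is to do the elimination carefully and confirm that at each step the sign works out, and that after the final step the surviving bottom-left (or, after a permutation, bottom-right) block is exactly $f(A_1) = A_1^s - A_1$ and not its negative or a shifted version. Since Higman's trick as stated only requires the transformed matrix to have $f(A_1)$ in the corner up to the usual ambiguity (one can always absorb a global sign by scaling a row), even $-f(A_1)$ would be acceptable, so strictly speaking the sign does not need to come out positive — but I would still track it to make the verification clean. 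Once the block-elimination is written out, everything else is routine, and the lemma follows; combined with the previous lemma (that $f = T_1^s - T_1$ satisfies the hypotheses of Proposition~\ref{heavy} when $s = q^d$) and Corollary~\ref{light}, this completes the proof of Theorem~\ref{main}, since the matrix $D(t_0,t_1)$ in the theorem statement is exactly $D_{f,r}$ with $r = n - s = n - |K|^d \ge 0$ under the hypothesis $|K| \le \log_d(n-1)$.
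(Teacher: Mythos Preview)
Your approach is essentially the paper's: verify Higman's trick by explicit block elimination on $L_f(I,A_1)$. The paper interleaves one row and one column operation at each step (add $A_1\cdot R_k$ to $R_{k+1}$, then subtract $C_{k+1}\cdot A_1$ from $C_1$), arriving directly at the matrix with $I$'s on the block superdiagonal, zeros on the diagonal, and $A_1^s-A_1$ in the bottom-left corner; a cyclic column permutation finishes. Your column-only first phase reaches the same bottom-left entry $f(A_1)$ but leaves the $-A_1$ blocks on the diagonal, to be cleaned up afterward.

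Two technical points to tighten. First, block \emph{column} operations must multiply on the right (the paper emphasizes this), so ``add $A_1\cdot C_2$ to $C_1$'' should read ``add $C_2\cdot(-A_1)$ to $C_1$''; this is harmless here because every block in sight is a polynomial in $A_1$ and hence they all commute, but it is worth stating correctly. Second, your cleanup phase cannot be completed with column operations and permutations alone: after your phase one the entry $(R_s,C_s)=-A_1$ has no invertible pivot available in row $R_s$ (only $f(A_1)$ lives there), and clearing the subdiagonal via $C_j\mapsto C_j+C_{j+1}A_1$ cascades new powers of $A_1$ down into row $R_s$. You need the row operations $R_j\mapsto R_j+A_1 R_{j-1}$ for $j=2,\dots,s$, which is exactly what the paper uses. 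With those two fixes your argument is complete, and the sign in fact comes out to $+f(A_1)$ on the nose, so no absorption is needed.
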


\begin{proof}
We have to be a little careful while applying block operations. One can add left multiplied block rows to other block rows, and right multiplied block columns to other block columns. We refer to \cite{DM} for an explanation. 

Consider $L(I,A_1)$, and let us do the following block row and column operations. Left multiply the first block row by $A_1$ and add it to the second block row. Then right multiply the second block column by $A_1$ and subtract from the first block column. The net result is 

$$
\begin{bmatrix} 0 & I & && & &\\
  A_1^2 & 0 & I & & &&\\
  & & - A_1 & I && &\\
  & & & \ddots & \ddots & &\\
   & & & & -A_1 & I \\
   -A_1 & & & & & -A_1 
 \end{bmatrix}.
$$

Now, left multiply the second block row by $A_1$ and add it to the third block row. Then, right multiply the third block column by $A^2$ and subtract it from the first block column. Continuing this process, we end up with 

$$
\begin{bmatrix} 0 & I & && & &\\
0& 0 & I & & &&\\
  \vdots& &  & I && &\\
  \vdots& & & \ddots & \ddots & &\\
   & & & &  & I \\
   A_1^s-A_1 & & & & & 0
 \end{bmatrix}.
$$

Then by permuting (block) columns, we can get it to the required form.

\end{proof}

\begin{proof} [Proof of Theorem~\ref{main}]
Taking $f = T_1^s - T_1$ where $s = |K|^d$. If we take $L_f$ as in the above lemma, then for $r = n - |K|^d \geq 1$, we have $D_{f,r} = D (t_0,t_1)$ and $\mathcal{X}_{f,r} = \mathcal{X}$, the linear subspace in the statement of the theorem. The theorem follows from Corollary~\ref{light}.
\end{proof}

\begin{remark}
We can give another interesting example when $K = F_2$. We leave it to the reader to check that $f(T_1,T_2) = [T_1,T_2]^2 + [T_1,T_2] \in A_{2,2}$ satisfies the hypothesis of Proposition~\ref{heavy}. Using the same ideas, we construct another counterexample for the regularity lemma. Consider the linear subspace $$
\mathcal{X} = \left\{ \left[ \begin{array}{ccc|ccc|c} 
0 & d & a &  0 & d & a & 0 \\
-d & 0 & b &  -d & 0 & b  & 0\\
a & b & 0  & a & b & 0 & 0\\
\hline
0 & d & a  & 0 & 0 & 0 & 0\\
-d & 0 & b  & 0 & 0 &0  & 0 \\
a & b & 0 & 0 & 0 & d & 0\\
\hline
0 & 0 & 0 & 0 & 0 & 0 & d
 \end{array} \right] :  a,b,c,d \in K \right\}.
$$
Then $\rk(\mathcal{X}^{\{2,2\}})$ is not a multiple of $2$.
\end{remark}

\end{document}